\newcommand{\cO}{\mathcal{O}}
\newcommand{\Gm}{\mathbb{G}_{\mathrm m}}
\newcommand{\gl}{\mathfrak{gl}}
\begin{document}

\title[Generic border subrank]{Border subrank via a
generalised Hilbert-Mumford criterion}

\begin{abstract}
We show that the border subrank of a sufficiently general tensor in
$(\CC^n)^{\otimes d}$ is $\cO(n^{1/(d-1)})$ for $n \to \infty$. Since
this matches the growth rate $\Theta(n^{1/(d-1)})$ for the generic
(non-border) subrank recently established by Derksen-Makam-Zuiddam, we
find that the generic border subrank has the same growth rate. 
In our proof, we use a generalisation of the Hilbert-Mumford criterion
that we believe will be of independent interest.
\end{abstract}

\author{Benjamin Biaggi}
\address{Mathematical Institute, University of Bern,
Alpeneggstrasse 22, 3012 Bern, Switzerland}
\email{benjamin.biaggi@unibe.ch}

\author{Chia-Yu Chang}
\address{Department of Mathematics, Texas A\&M University, 
Mailstop 3368, College Station, TX 77843-3368, USA}
\email{chiayu@tamu.edu}

\author{Jan Draisma}
\address{Mathematical Institute, University of Bern, Sidlerstrasse 5, 3012
Bern, Switzerland; and Department of Mathematics and Computer Science,
P.O. Box 513, 5600 MB, Eindhoven, the Netherlands}
\email{jan.draisma@unibe.ch}

\author{Filip Rupniewski}
\address{Mathematical Institute, University of Bern,
Alpeneggstrasse 22, 3012 Bern, Switzerland}
\email{filip.rupniewski@unibe.ch}

\thanks{BB and FR are supported by Swiss National Science Foundation
(SNSF) project grants 200021-191981 and 200021-227864, and JD is partially supported by
those grants and partially supported by Vici grant 639.033.514 from the
Netherlands Organisation for Scientific Research (NWO). CC is supported by NSF grant AF-2203618}

\maketitle

\section{Introduction and main theorem} \label{sec:Intro}

\subsection*{Introduction}
The subrank, border subrank, and asymptotic subrank play central roles in
several areas including algebraic complexity theory (value of a tensor
\cite{Strassen87}) and 
quantum information theory (asymptotic structure of the capacity of a
quantum channel \cite{Watrous_2018}). 

A central problem in computer science is to study the complexity of
matrix multiplication, which is governed by a constant called the {\it
exponent of matrix multiplication}. This is denoted by $\omega$ and
defined as the infimum over $\tau$ such that the $n\times n$-matrices may
be multiplied using $\cO(n^\tau)$ arithmetic operations. It is known that
$2\leq \omega\leq 3$ and Strassen's original algorithm \cite{Strassen69}
shows that $\omega\leq\log_2(7)<2.81$. Since then, several approaches
have been developed to find an upper bound on $\omega$ or even show that
$\omega$ is 2. The most effective upper bound method found so
far is the {\it laser method}, which was first introduced by Strassen
\cite{Strassen87}. 
A key idea is to find an intermediate tensor $T$ which has low asymptotic
rank (low cost) and is ``close to'' being a matrix multiplication tensor
(high value). The tensor rank measures the ``cost'' of a tensor and the
subrank measures the ``value'' of a tensor. 

It is natural to study how well any sufficiently general tensor $T \in
\CC^n \otimes \CC^n \otimes \CC^n$ performs on these criteria. For rank
and border rank, this is well known: rank and border rank of such a
$T$ are the same and equal to the maximal border rank, namely, $\lceil
\frac{n^3}{3n-2} \rceil$ \cite{Lickteig85} (except for $n=3$, where the
border-rank-$4$ tensors form a hypersurface of degree $9$ defined by
Strassen's equation \cite{Landsberg10,Strassen83}).

Only much more recently has the subrank  of any sufficiently general
tensor been determined: in \cite{Derksen22b}, this is shown to be in the
(small) interval
\begin{equation} \label{eq:Interval} 
\left[3 \lfloor \sqrt{n/3 + 1/4} - 1/2 \rfloor, \lfloor \sqrt{3n-2}
\rfloor\right].
\end{equation} 
The upper bound in this interval was conjectured to be the correct
value for the generic subrank, and indeed, this conjecture was proved in
\cite{Pielasa24}. 

In particular, the generic subrank is $\Theta(n^{1/2})$,
which improves the previous upper bound $\cO(n^{2/3})$ by Bürgisser
\cite{Burgisser1997Degen-6409} and Strassen \cite{STRASSEN+1991+127+180}.

For the border subrank, so far very little was known, other
than that the border subrank of a sufficiently general tensor
in $\CC^n\otimes\CC^n\otimes\CC^n$ is bounded above by $n-1$;
see the invariant-theoretic argument due to Fulvio Gesmundo
in~\cite{Chang22}. The goal of this paper is to dramatically improve
the upper bound on the generic border subrank to $\cO(n^{1/2})$,
matching the growth rate $\Theta(n^{1/2})$ for the generic subrank from
\cite{Derksen22b}. Furthermore, combining our result with the results
from \cite{Derksen22b}, we will establish that the generic subrank and
the generic border subrank do not coincide for $n \gg 0$.

\subsection*{(Border) subrank}

Throughout, we fix an integer $d \geq 2$ and work over an
arbitrary algebraically closed field $K$. Let $V_1,\ldots,V_d$ be
finite-dimensional vector spaces. The {\em subrank} of a tensor
$T \in V_1 \otimes \cdots \otimes V_d$ is the maximal $r$ for which there
exist linear maps $\phi_i:V_i \to K^r,\ i=1,\ldots,d$ such that \[
(\phi_1 \otimes \cdots \otimes \phi_d) T = I_r:=\sum_{i=1}^r e_i^{\otimes
d} \in (K^r)^{\otimes d}. \] Here $e_1,\ldots,e_r$ is the standard
basis of $K^r$, and $I_r$ is called the $r$-th {\em unit tensor}.
For $d=3$, the subrank of $T$ was introduced in theoretical computer
science by Strassen \cite{Strassen87} as a measure of the {\em value}
of $T$---it measures how many independent scalar multiplications can
be linearly embedded in the bilinear map $V_1^* \times V_2^* \to V_3$
encoded by $T$.

The {\em border subrank} of $T$ is the maximal $r$ for which $I_r$
lies in the (Zariski) closure of the set
\[ \{(\phi_1 \otimes \cdots \otimes \phi_d)T \mid 
\phi_i \in \Hom(V_i,K^r), i=1,\ldots,d\}. \]
Clearly, the border subrank of $T$ is at least the subrank of $T$.
When $d=2$, equality holds, and both notions agree with the matrix rank
of $T$. Another immediate observation is that the border subrank of $T$
is at most the rank of $T$ when regarded as a linear map $\bigotimes_{j
\neq i} V_i^* \to V_i$, for any $i \in [d]:=\{1,\ldots,d\}$, and hence
at most $\min_i \dim(V_i)$.

\subsection*{Generic subrank}

The locus of tensors of subrank precisely $r$ in $V_1 \otimes \cdots
\otimes V_d$ is a constructible set, and hence there exists a unique
$r$ for which this locus is dense. This $r$ is called the {\em generic
subrank} of tensors in $V_1 \otimes \cdots \otimes V_d$ and is denoted
$Q(n_1,\ldots,n_d)$ where $n_i=\dim(V_i),\ i=1,\ldots,d$.  It was
proved in \cite{Derksen22b} that $Q(n,n,\ldots,n)=\Theta(n^{1/(d-1)})$
for $n \to \infty$. First, and for general $n_1,\ldots,n_d$, they upper
bound the dimension of the subrank $\geq r$ locus by estimating the
rank of the derivative of a natural morphism parameterising the locus
of tensors of subrank $\geq r$ (this locus is irreducible). This yields
$Q(n,n,\ldots,n) = \cO(n^{1/(d-1)})$. The lower bound involves a clever
construction showing that the rank estimate is essentially tight. For
$d=3$, the results are even sharper and imply that $Q(n,n,n)/\sqrt{3n}
\to 1$ for $n \to \infty$; compare \eqref{eq:Interval}. Very recently,
using the techniques of \cite{Derksen22b} and a very careful analysis of
said derivative, the exact generic subrank of $n_1 \times \cdots \times
n_d$-tensors was determined in \cite{Pielasa24}.

\subsection*{Generic border subrank}

The goal of this paper is to establish an upper bound on the generic
border subrank. We first show that this notion is well-defined. To this
end, let $X_r,X_{\geq r},X_{<r} \subseteq V_1 \otimes \cdots \otimes V_d$
be the loci of tensors of border subrank precisely $r$, at least $r$,
and strictly less than $r$, respectively.

\begin{prop} \label{prop:Constructible}
For any $d,r,V_1,\ldots,V_d$, the set $X_r \subseteq V_1 \otimes \cdots
\otimes V_d$ of tensors of border subrank precisely $r$ is a constructible
set. Therefore, the same holds for the sets $X_{<r}$ and $X_{\geq r}$.
\end{prop}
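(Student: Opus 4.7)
The plan is to show that $X_{\geq r}$ is constructible; then $X_{<r}$ (the complement in $V := V_1 \otimes \cdots \otimes V_d$) and $X_r = X_{\geq r} \setminus X_{\geq r+1}$ are constructible as well. Set $\Hom_r := \prod_{i=1}^d \Hom(V_i, K^r)$ and $W := (K^r)^{\otimes d}$, and consider the morphism
\[
  \mu\colon V \times \Hom_r \to V \times W,\qquad (T, \phi) \mapsto \bigl(T, (\phi_1 \otimes \cdots \otimes \phi_d)\,T\bigr).
\]
By Chevalley's theorem, the image $A := \mu(V \times \Hom_r)$ is constructible in $V \times W$; since its source is irreducible, so is $A$ and hence its closure $\overline{A}$. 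The fiber $A_T$ over $T$ is exactly $\{\phi \cdot T : \phi \in \Hom_r\}$, so by definition $T \in X_{\geq r}$ iff $I_r \in \overline{A_T}$.

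I argue by Noetherian induction on irreducible closed subvarieties $Z \subseteq V$ that $X_{\geq r} \cap Z$ is constructible in $Z$. Restrict $\mu$ to $Z$: let $A_Z$ be its image and $\overline{A}_Z \subseteq Z \times W$ its closure, still irreducible. The key step is to find a dense open $U \subseteq Z$ such that $(\overline{A}_Z)_T = \overline{A_T}$ for every $T \in U$. Granted this, for $T \in U$ the condition $I_r \in \overline{A_T}$ becomes $(T, I_r) \in \overline{A}_Z$, a closed condition on $Z$; hence $X_{\geq r} \cap U$ is locally closed in $Z$. The complement $Z \setminus U$ is a proper closed subset; decomposing it into irreducible components and applying the induction hypothesis to each yields that $X_{\geq r} \cap (Z \setminus U)$ is constructible, so $X_{\geq r} \cap Z$ is the union of a locally closed and a constructible set.

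The main obstacle is the key step. I expect to establish it as follows. Because $K$ is algebraically closed, $Z \times \Hom_r$ is geometrically integral over $K$; hence so is its image under $\mu$, and so is the closure $\overline{A}_Z$. Therefore the generic fiber of the projection $p\colon \overline{A}_Z \to Z$ is geometrically integral over $K(Z)$, and by standard results on fibers of dominant morphisms (EGA IV, Section 9.7), there is a dense open $U_1 \subseteq Z$ on which every closed-point fiber $(\overline{A}_Z)_T$ is irreducible of dimension $d_0 := \dim \overline{A}_Z - \dim Z$. Meanwhile, since the image dimension of a morphism is lower semi-continuous in parameters, the function $T \mapsto \dim A_T$ attains its maximum value $d_0$ on a dense open $U_2 \subseteq Z$. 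On $U := U_1 \cap U_2$, both $\overline{A_T}$ (irreducible, as the closure of the image of the irreducible $\Hom_r$) and $(\overline{A}_Z)_T$ are irreducible of dimension $d_0$, with $\overline{A_T} \subseteq (\overline{A}_Z)_T$, so equality follows.
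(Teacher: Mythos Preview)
Your argument is correct and takes a genuinely different route from the paper. The paper invokes a uniform degree bound from elimination theory: there is a $D$, independent of $T$, such that the vanishing ideal of $G\cdot T$ is generated in degree $\leq D$; then ``border subrank $<r$'' becomes a first-order formula with quantifiers ranging over the finite-dimensional space $K[V]_{\leq D}$, and Chevalley finishes. Your approach avoids any effective elimination-theoretic input and instead uses Noetherian induction together with the generic good behaviour of the family $T\mapsto \overline{A_T}$. This is more elementary in that it only uses standard facts about constructible sets and fibre dimensions, and it is more general: the same Noetherian-induction scheme shows that for any morphism $V\times H\to V\times W$ over $V$ with $H$ geometrically irreducible, the set of $T$ for which a fixed $w_0\in W$ lies in the fibrewise closure is constructible. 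The paper's route, on the other hand, is shorter once the degree bound is granted, and it makes explicit that the obstruction to constructibility is precisely the a~priori unbounded degree of orbit-closure equations.

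One step in your write-up is not quite justified as stated. You deduce that the generic fibre of $p\colon\overline{A}_Z\to Z$ is geometrically integral from the fact that $\overline{A}_Z$ is geometrically integral over $K$. This implication is false in general (e.g.\ $\{x^2=y\}\to\mathbb{A}^1_y$ has geometrically integral total space but a generic fibre that is not geometrically irreducible). The correct reason, which your setup provides, is that the generic fibre of $\overline{A}_Z\to Z$ equals the closure of the image of $(\Hom_r)_{K(Z)}\to W_{K(Z)}$, and $(\Hom_r)_{K(Z)}$ is affine space over $K(Z)$, hence geometrically irreducible; the closure of the image of a geometrically irreducible scheme is again geometrically irreducible. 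With this fix the EGA~IV~\S9.7 step goes through, and the rest of your Noetherian induction is sound. (Your lower-semicontinuity claim for $T\mapsto\dim A_T$ can be seen, for instance, from Chevalley's upper semicontinuity of local fibre dimension applied to $Z\times\Hom_r\to\overline{A}_Z$ and openness of the projection $Z\times\Hom_r\to Z$.)
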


By the proposition, there is a unique $r$ for which $X_r$ is dense,
and this $X_r$ contains a Zariski open, dense subset of $V_1 \otimes
\cdots \otimes V_d$. This $r$ is called the {\em generic border
subrank} of tensors in $V_1 \otimes \cdots \otimes V_d$.

\begin{thm}[Main Theorem] \label{thm:Main}
The generic border subrank of tensors in $(K^n)^{\otimes d}$
is $\cO(n^{1/(d-1)})$ for $n \to \infty$.
\end{thm}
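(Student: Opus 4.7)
The plan is to reformulate ``border subrank $\geq r$'' as a torus-degeneration problem and then adapt the parameter-count used by Derksen-Makam-Zuiddam \cite{Derksen22b} for the generic subrank. First I would note that $T$ has border subrank $\geq r$ iff there exists a formal curve $\phi_i(t)\in \Hom(V_i,K^r)\otimes K((t))$ with $\lim_{t\to 0}(\phi_1(t)\otimes\cdots\otimes\phi_d(t))T = I_r$. Applying the Smith/Cartan decomposition to each $\phi_i(t)$ over $K[[t]]$ factors it as $u_i(t)\,D_i(t)\,g_i(t)$ with $u_i(t)\in GL_r(K[[t]])$, $g_i(t)\in GL_n(K[[t]])$, and $D_i(t)=\mathrm{diag}(t^{b_{i,1}},\ldots,t^{b_{i,r}},0,\ldots,0)$. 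Absorbing the leading values $u_i(0)$ into $I_r$ (inside its $GL_r^{\,d}$-orbit) and $g_i(0)$ into $T$ (preserving border subrank, by Proposition~\ref{prop:Constructible} and $G$-invariance) reduces the problem to one of toric degenerations.

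The central new ingredient is a generalised Hilbert-Mumford criterion asserting that border subrank $\geq r$ is equivalent to the existence of $\phi_i\in\Hom(V_i,K^r)$ and one-parameter subgroups $\lambda_i:\Gm\to GL(V_i)$, with integer weights $\mu_{i,j}$ in some basis, such that
\[\lim_{t\to 0}(\phi_1\otimes\cdots\otimes\phi_d)(\lambda_1(t)\otimes\cdots\otimes\lambda_d(t))T = I_r.\]
Decomposing $T=\sum_k T^{(k)}$ into weight components under $\bigotimes_i\lambda_i$, this amounts to the linear-algebraic conditions $(\phi_1\otimes\cdots\otimes\phi_d)(T^{(k)})=0$ for every $k<0$ together with $(\phi_1\otimes\cdots\otimes\phi_d)(T^{(0)})=I_r$. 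The nontrivial direction is the forward one, where one has to pass from the full curve factorisation above to this constant-plus-torus form while retaining the limit.

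With the criterion in place, I would parameterise $X_{\geq r}$ by triples consisting of discrete weight data $(\mu_{i,j})$, maps $\phi_i$, and tensors $T$ satisfying the displayed constraints, modulo the residual $G=GL_n^{\,d}$-action that rotates the basis witnessing $\lambda_i$. Normalising the weights (say so that $-n \leq \mu_{i,j} \leq n$) restricts attention to finitely many combinatorial types. For generic $\phi_i$, the weight-zero constraint alone already forces an $r^d$-codimensional affine condition on the weight-zero part of $T$, while $\phi_i$ contributes at most $dnr$ parameters; additional $k<0$ constraints, when present, only tighten the bound. A careful count in the spirit of \cite{Derksen22b} then yields $\dim X_{\geq r}<n^d$ as soon as $r^{d-1}>Cdn$ for a suitable constant $C$, giving the bound $\cO(n^{1/(d-1)})$.

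The principal obstacle I expect is the generalised Hilbert-Mumford step: the set $\{(\phi_1\otimes\cdots\otimes\phi_d)T : \phi_i\in\Hom(V_i,K^r)\}$ is not the orbit of a reductive group acting on $T$, and $I_r$ is not a highest-weight vector but has merely a finite stabiliser inside $GL_r^{\,d}$, so the classical Hilbert-Mumford argument does not apply off the shelf. One must track the unit factors $u_i(t),g_i(t)$ in the Cartan factorisation and argue that they can be absorbed using the $GL_r^{\,d}$-freedom on $I_r$ and the $G$-freedom on $T$, leaving only a torus $1$-PSG. A secondary subtlety is the combinatorial bookkeeping over weight types in Step~3: one must verify uniformly over all normalised weight types $(\mu_{i,j})$ that no ``bad'' type produces a component of $X_{\geq r}$ of dimension $n^d$, which requires estimating the sizes of the weight-graded pieces of $(K^n)^{\otimes d}$ across all normalised 1-PSGs.
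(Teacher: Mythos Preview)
Your plan has the right architecture---Cartan decomposition followed by a parameter count over finitely many weight types---but both of the steps you flag as delicate do in fact fail as written.

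\textbf{The Hilbert--Mumford step is too strong.} Your proposed criterion asserts that one can arrange
\[
\lim_{t\to 0}\,(\phi_1\otimes\cdots\otimes\phi_d)\bigl(\lambda(t)\cdot T\bigr)=I_r
\]
\emph{on the nose}, with constant $\phi_i$ and a genuine one-parameter subgroup $\lambda$. The absorption argument you sketch does not deliver this: after writing $\phi_i(t)=u_i(t)\,D_i(t)\,g_i(t)$ and replacing $u_i,g_i$ by their constant terms $u_i(0),g_i(0)$, the higher-order parts of $g_i(t)$ still feed into the negative-weight coefficients of the curve, and precisely those coefficients decide whether the limit lands on $I_r$ or merely somewhere in its orbit closure. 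What the Cartan decomposition actually yields is the weaker two-sided statement of Proposition~\ref{Hilbert-Mumford-Prop}:
\[
\lim_{t\to 0}\lambda(t)\cdot T \;=\; \lim_{t\to\infty}\lambda(t)\cdot S \qquad\text{for some } S\in G\cdot I_r,
\]
and the example immediately following that proposition shows the common limit need not lie in $G\cdot I_r$. Whether border subrank always coincides with the ``Hilbert--Mumford subrank'' implicit in your criterion is in fact left as an open question at the end of the paper. The paper works around this by allowing $T_0=S_0$ to be an arbitrary point of $\overline{G\cdot I_r}$ and bounding its parameters separately (Lemma~\ref{lm:S0}).

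\textbf{The parameter count misses the eigenbasis of $\lambda$.} Fixing a combinatorial weight type $(\mu_{i,j})$ does not fix $\lambda$: one still has to choose the eigenbasis in each $V_i$, which costs on the order of $dn^2$ parameters and cannot be cancelled by ``the residual $G$-action'' (that action moves $T$, not the witness). Your inequality $r^{d-1}>Cdn$ would follow only if that cost were absent; with it, the count gives at best $r^d>Cdn^2$, i.e.\ $r=\cO(n^{2/d})$, which for $d=3$ is only the old $\cO(n^{2/3})$ bound. The paper eliminates the $n^2$ term via a slice-rank argument (Lemma~\ref{lm:Tg0}): since some $S\in G\cdot I_r$ lies in $V_{\leq 0}(\lambda)$ and has slice rank $r$, the downward-closed index set $P$ of weight-$\le 0$ positions must contain a full hypercube $[s]^d$ with $s=\lfloor r/d\rfloor$. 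Thus $T_{>0}$ is confined to a space of dimension at most $n^d-s^d$, and the only basis data that matters for this containment is the $s$-dimensional subspace $U_i\subset V_i$ in each factor---a Grassmannian of dimension $s(n-s)=\cO(rn)$ rather than $n^2$. That is what brings the count down to $r^{d-1}>Cn$ and hence $r=\cO(n^{1/(d-1)})$.
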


The Main Theorem follows from the following theorem for general
dimensions.

\begin{thm} \label{thm:Formula}
Let $V_1,\ldots,V_d$ be vector spaces of dimensions
$n_1,\ldots,n_d$, respectively, and let $r$ be a nonnegative integer.
Set $s:=\lfloor r/d \rfloor$. Then the locus $X_{\geq r}$ of tensors of
border subrank $\geq r$ has dimension at most
\[ n_1 \cdots n_d - s^d + 
\sum_{i=1}^d 2 s(n_i-s) + 
r \left(1 + d(r-1) + \sum_{i=1}^d (n_i-r) \right). \]
\end{thm}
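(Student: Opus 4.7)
The plan is to use the generalised Hilbert--Mumford criterion promised in the abstract to replace the closure condition defining border subrank with a one-parameter subgroup degeneration, then count the dimension of the resulting incidence variety.

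First, I would use the criterion to recast ``$T \in X_{\geq r}$'' as: there exist $\phi_i : V_i \to K^r$ and a one-parameter subgroup $\lambda : \Gm \to GL_r^d$ with $\lim_{t \to 0} \lambda(t) \cdot (\phi_1 \otimes \cdots \otimes \phi_d) T = I_r$. After simultaneously diagonalising the $\lambda_i$ in the standard basis of $K^r$ (absorbing the conjugator into the $\phi_i$), we may take $\lambda_i(t) = \mathrm{diag}(t^{w_{i,1}}, \ldots, t^{w_{i,r}})$ with integer weights satisfying the balance condition $\sum_{i=1}^d w_{i,j} = 0$ for each $j \in [r]$. The limit equality then becomes linear conditions on the entries $c_{j_1, \ldots, j_d}(T,\phi) := (\phi_1 \otimes \cdots \otimes \phi_d)(T)_{j_1, \ldots, j_d}$: namely $c_{j, \ldots, j} = 1$ for every $j$, and $c_{j_1, \ldots, j_d} = 0$ for every off-diagonal tuple with $\sum_i w_{i, j_i} \leq 0$ (off-diagonal tuples with strictly positive weight sum vanish in the limit regardless of their coefficient).

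Next, for each valid weight pattern $w$ I would count the dimension of the incidence variety $Z_w \subseteq (V_1 \otimes \cdots \otimes V_d) \times \prod_i \Hom(V_i, K^r)$ cut out by these $r + N_{\leq}(w)$ linear conditions, where $N_{\leq}(w)$ denotes the number of bad off-diagonal tuples. For generic $\phi$ with injective components the conditions are independent in $T$ (since the associated functionals on $V_1 \otimes \cdots \otimes V_d$ are the $r^d$ linearly independent evaluations at the tensor products $\phi_1^* e_{j_1}^* \otimes \cdots \otimes \phi_d^* e_{j_d}^*$), so
\[
\dim Z_w \leq r \sum_i n_i + n_1 \cdots n_d - r - N_{\leq}(w).
\]
The torus subgroup of $GL_r^d$ stabilising $I_r$, of dimension $r(d-1)$ for $d \geq 3$, acts on $Z_w$ by left multiplication on $\phi$ preserving every constraint and fixing the projection to $V_1 \otimes \cdots \otimes V_d$; the action is free for generic $\phi$, so generic fibres of the projection have dimension at least $r(d-1)$. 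Since only finitely many sign patterns of weight sums can occur, $\dim X_{\geq r}$ is bounded by the maximum over $w$ of $n_1 \cdots n_d + r \sum_i n_i - rd - N_{\leq}(w)$.

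The hard part will be the combinatorial lower bound on $N_{\leq}(w)$, uniform across all weight patterns with pointwise zero column sums, strong enough to reproduce the $-s^d + \sum 2s(n_i - s) + r(1 + d(r-1) + \sum(n_i - r))$ form of the bound. The shape of the correction $\sum 2s(n_i - s)$ suggests that the extremal weight configurations have a flag-like structure with roughly $s$ positive, $s$ negative, and $n_i - 2s$ zero coordinates per factor, so that an ``all-zero weight'' sub-tensor of dimension $s^d$ yields the leading $-s^d$ term and two independent Grassmannian choices of $s$-planes per $V_i$ produce the $2s(n_i - s)$ corrections. Establishing this combinatorial bound uniformly over $w$, and verifying that the ``generic $\phi$'' argument does not lose codimension when the $\phi_i$ degenerate, is the main technical challenge.
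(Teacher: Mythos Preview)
Your reduction in the first step is where the approach breaks down. The generalised Hilbert--Mumford criterion does \emph{not} produce a one-parameter subgroup whose limit on (a push-forward of) $T$ equals $I_r$ itself. Because the $G$-orbit of $I_r$ is not closed, the ordinary criterion fails; the paper's generalisation only furnishes $\lambda$ together with some $S\in G\cdot I_r$ such that $\lim_{t\to 0}\lambda(t)\cdot T=\lim_{t\to\infty}\lambda(t)\cdot S$, and this common limit may well lie \emph{outside} $G\cdot I_r$ (the paper exhibits an explicit $\SL_2$ example where the analogous limit is $0$). Consequently your balance condition $\sum_i w_{i,j}=0$ and the constraints $c_{j,\ldots,j}=1$ are unjustified, and your incidence varieties $Z_w$ do not cover $X_{\geq r}$. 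What you are actually bounding is the locus of tensors whose \emph{Hilbert--Mumford subrank} is at least $r$; the paper's final section explicitly leaves open whether this coincides with border subrank. A related symptom: your speculation about extremal weights with ``$n_i-2s$ zero coordinates per factor'' cannot arise in your own setup, where $\lambda$ maps into $\GL_r^d$ and has only $r$ weights per factor.

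The paper instead takes $\lambda:\Gm\to G=\prod_i\GL(V_i)$ on the full space $V$ and exploits the two-sided limit directly: writing $T=T_0+T_{>0}$ and $S=S_0+S_{<0}$ in the $\lambda$-weight decomposition, one has $T_0=S_0$, so it suffices to bound the parameters for $T_{>0}$ and for $S_0$ separately and add. For $T_{>0}$, the key input is not a combinatorial weight inequality but the fact that $S\in V_{\leq 0}(\lambda)$ has \emph{slice rank} $r$; this forces the down-closed set of nonpositive-weight index tuples to contain a hypercube $[s]^d$, whence $\dim V_{>0}\leq n_1\cdots n_d-s^d$, plus $\sum_i 2s(n_i-s)$ for the Grassmannian choices of the two $s$-dimensional eigensubspaces in each $V_i$. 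For $S_0$, a Borel/parabolic argument shows that $S_0$ lies in the $\prod_i\GL_r$-orbit closure of $I_r$ inside some $W_1\otimes\cdots\otimes W_d$ with $\dim W_i=r$, contributing the remaining $r\bigl(1+d(r-1)+\sum_i(n_i-r)\bigr)$. Neither of these two ideas appears in your outline, and the uniform combinatorial lower bound on $N_{\leq}(w)$ that you flag as ``the hard part'' is left entirely open.
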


Together with the results from \cite{Derksen22b}, the Main Theorem
implies that the growth rates of the generic border subrank and of the
generic subrank are both equal to $\Theta(n^{1/(d-1)})$. However,
their precise values are {\em not} equal: 

\begin{thm} \label{thm:d3}
The generic border subrank of tensors in $(K^n)^{\otimes 3}$ is at least
$\lfloor \sqrt{4n} \rfloor -3 $, and hence, for $n$ sufficiently large,
strictly greater than the generic subrank.
\end{thm}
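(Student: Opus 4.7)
\noindent\emph{Plan for Theorem~\ref{thm:d3}.} Set $r:=\lfloor\sqrt{4n}\rfloor - 3$. The plan is to exhibit a Zariski-dense subset of $(K^n)^{\otimes 3}$ consisting of tensors of border subrank $\geq r$; combined with Proposition~\ref{prop:Constructible} this shows that the generic border subrank is at least $r$. The concluding sentence then follows from the upper bound $\lfloor\sqrt{3n-2}\rfloor$ on the generic subrank proved in \cite{Derksen22b}: since $\sqrt{4n}-\sqrt{3n-2}\to\infty$ as $n\to\infty$, strict inequality holds for $n$ sufficiently large.

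To construct the required subset, I would set $s:=\lfloor r/2\rfloor$, so that $n\geq s^2$ with a small surplus of basis vectors. Identify a distinguished subset of the basis of $K^n$ with $[s]\times[s]$, giving basis vectors $e_{ij}$. For $T$ in a Zariski-open subset of $(K^n)^{\otimes 3}$, an appropriate element of $\mathrm{GL}_n^3$ arranges that the ``diagonal'' part of $T$ on the $s^2$-block equals $\sum_{(i,j)\in[s]^2}e_{ij}^{\otimes 3}$, while the remaining entries of $T$ stay arbitrary. Identify $K^r\cong K^s\oplus K^s\oplus K^{r-2s}$ with bases $\{x_i\}$, $\{y_j\}$, and a remainder, and introduce contractions $\phi_k(\epsilon):K^n\to K^r$ of Bini type, sending $e_{ij}\mapsto x_i+\epsilon^{\alpha_k}y_j + (\text{correction in auxiliary slots})$ for exponents $\alpha_k$ to be determined. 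Expanding $(x_i+\epsilon^{\alpha_k}y_j)^{\otimes 3}$ and summing over $(i,j)\in[s]^2$ produces $x_i^{\otimes 3}$ and $y_j^{\otimes 3}$ as the two ``pure'' contributions that must survive in the limit, plus six ``mixed'' monomials such as $x_i^{\otimes 2}\otimes y_j$ that must vanish.

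The main obstacle will be the arithmetic of this Bini degeneration: since no triple $\alpha_1,\alpha_2,\alpha_3$ satisfies both $\alpha_1+\alpha_2+\alpha_3=0$ and $\alpha_i+\alpha_j>0$ for all $i\neq j$, the six mixed terms cannot be eliminated purely by the orders of $\epsilon$. The auxiliary basis vectors of $K^n$ (those outside the $s^2$-block) must therefore be used to supply correcting rank-one summands of matching $\epsilon$-order and opposite sign. Setting this up correctly is where a constant number of dimensions is lost, accounting for the $-3$ slack in the bound on $r$. Once the corrections are in place and $\phi_k(\epsilon)$ is rescaled to absorb a factor of $s$, the limit of $(\phi_1(\epsilon)\otimes\phi_2(\epsilon)\otimes\phi_3(\epsilon))T$ equals $I_r\in(K^r)^{\otimes 3}$. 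Since all coefficients and exponents can be chosen to depend algebraically on $T$ in a Zariski-open neighbourhood, this produces the required open dense subset of $X_{\geq r}$, completing the argument.
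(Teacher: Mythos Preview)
Your plan has a genuine gap: it handles only the $s^2$ ``diagonal'' entries of $T$ and ignores the rest. After your $\GL_n^3$ normalisation, the tensor has the form $T=\sum_{(i,j)\in[s]^2} e_{ij}^{\otimes 3}+T'$ with $T'$ arbitrary off that diagonal. Your maps $\phi_k(\epsilon)$ are nonzero on the span of the $e_{ij}$, so $(\phi_1\otimes\phi_2\otimes\phi_3)T'$ picks up contributions from all the off-diagonal entries $T'_{(ij),(kl),(mn)}$ inside the $s^2$-block; for instance the $\epsilon^0$ part already contains $\sum T'_{(ij),(kl),(mn)}\, x_i\otimes x_k\otimes x_m$, which is an arbitrary tensor in $(K^s)^{\otimes 3}$ and does not tend to zero. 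No choice of exponents $\alpha_k$ or of a constant number of auxiliary correction slots can absorb these $\Theta(s^6)$ arbitrary coefficients. Indeed, if one could normalise an $r\times r\times r$ subblock of a generic $T$ to $I_r$ using only the $\GL_n^3$-action, that would prove generic \emph{subrank} $\geq r$, contradicting the $\lfloor\sqrt{3n-2}\rfloor$ upper bound for $r\sim 2\sqrt{n}$. So the $\epsilon$-degeneration must do substantially more work than you allow it; a Bini trick aimed only at $\sum e_{ij}^{\otimes 3}$ cannot suffice.

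The paper's argument is structured differently and avoids this problem. It works on the source side: one chooses a one-parameter subgroup $\lambda$ of $\GL_n^3$ whose nonpositive-weight positions form a pyramid $P\subset [n]^3$ of size $|P|=\Theta(r^3)$, so that $\lambda(t)\cdot T$ kills all entries outside $P$ as $t\to 0$. What remains is to hit, for generic $T$, a fixed target $S_0\in G\cdot I_r$ supported on $P$; this is done by a derivative computation in the style of \cite{Derksen22b}, using two copies of $\GL_n$ and carefully placed auxiliary full-rank matrices in $T_0$ to make the map $(g_1,g_2,T)\mapsto (g_1,g_2,\mathrm{id})\cdot T$ dominant onto tensors with prescribed $P$-part. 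The counting that gives $n\geq (r+3)^2/4$ comes from fitting those auxiliary matrices, not from cancelling Bini cross-terms. In short: the degeneration must annihilate the generic off-block entries \emph{before} any finite-dimensional normalisation is attempted, and your outline has these two steps in the wrong order.
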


\subsection*{Proof sketch}

First, we establish Proposition~\ref{prop:Constructible} in
\S\ref{sec:Constructible}. Next, there exist tensors in $V_1 \otimes
\cdots \otimes V_d$ of border subrank $\geq r$ if and only if $\dim(V_i)
\geq r$ for all $i$. Then, choosing linear embeddings $K^r \to V_i$ for
all $i$, we may regard $I_r$ as a tensor in $V_1 \otimes \cdots \otimes
V_d$. The border subrank of a tensor $T$ is $\geq r$ if and only if $I_r$ lies
in the orbit closure of $T$ under the group $G:=\prod_{i=1}^d \GL(V_i)$.

Now it is well known that if a reductive algebraic group $H$ acts on
an affine variety $Z$, $p,q \in Z$ satisfy $q \in \ol{H\cdot p}$,
and the $H$-orbit of $q$ is closed, then there exists an algebraic
group homomorphism ({\em one-parameter subgroup}) $\lambda$ from
the multiplicative group $\Gm$ over $K$ to $H$ such that $\lim_{t \to 0}
\lambda(t) \cdot p \in H \cdot q$---this is the celebrated Hilbert-Mumford criterion;
see, e.g., \cite[Theorem 1.4]{Kempf78}.  Unfortunately, the criterion
does not directly apply in our setting, since the orbit of $I_r$ under
the group $G$ is not closed. Indeed, $\overline{G \cdot I_r}$ is the
set of tensors of ordinary border rank $\leq r$, and this contains many
tensors not in $G \cdot I_r$ and even tensors of ordinary rank $>r$. So
if $T$ has border subrank $r$, it is not clear whether there exists a
one-parameter subgroup $\lambda$ where 
$\lim _{t \to 0} \lambda(t) \cdot T $ is contained in $ G \cdot I_r$.

However, not all is lost: in \S\ref{sec:CIM}, using the
Cartan-Iwahori-Matsumoto decomposition in loop groups, we prove a
generalisation of the Hilbert-Mumford criterion
(Proposition~\ref{Hilbert-Mumford-Prop})
that does apply when $H \cdot q$ is not closed. 
In \S\ref{sec:Proof} we specialise this generalised criterion
to the setting where $Z=V_1 \otimes \cdots \otimes V_d$ and $H=G$ and
$q=I_r$. We show that we can cover the locus of tensors of border subrank
$\geq r$ with countably many constructible subsets, each corresponding to 
a tuple of integer exponents 
of $t$ in a suitable one-parameter subgroup, and for each of
these subsets, we show that its dimension is at most the formula from
Theorem~\ref{thm:Formula}.

\section{Constructibility} \label{sec:Constructible}

We start by showing that the loci $X_{<r},X_r,X_{\geq r}$ of tensors
of border subrank $<r$, $r$, and $\geq r$, respectively, are 
constructible.

\begin{proof}[Proof of Proposition~\ref{prop:Constructible}]
We show that $X_{<r}$ is constructible for all $r$. This implies the
other statements, since $X_{\geq r}$ is the complement of $X_{<r}$ and
$X_r$ is the difference $X_{<r+1} \setminus X_{<r}$. If $r>\dim(V_i)$
for some $i$, then $X_{<r}$ is all of $V_1 \otimes \cdots \otimes V_d$
and we are done. So assume that all $V_i$ have dimension $\geq r$ and
regard $I_r$ as a tensor in $V_1 \otimes \cdots \otimes V_d$. 

By standard results in elimination theory (see, e.g., \cite{Dube90}), there exists an integer
$D$ such that for all $T \in V_1 \otimes \cdots \otimes V_d$ the ideal 
$J_T \subseteq
K[V_1 \otimes \cdots \otimes V_d]$ of polynomials that vanish identically on $\{g T
\mid g \in G\}$ is generated by polynomials of degree $\leq D$. The
border subrank of $T$ is at least $r$ if and only if all polynomials
in the degree-$\leq D$ part $(J_T)_{\leq D}$ of $J_T$ vanish on
$I_r$. Conversely, the border subrank is $<r$ if and only if
\[ \exists f \in K[V^{\otimes d}]_{\leq D}: (f(I_r) \neq 0) \wedge
(\forall g \in G: f(gT)=0). \]
By quantifier elimination (Chevalley's theorem), the locus $X_{<r}$ of
$T$ satisfying the formula above is constructible; here we use that $f$
runs over a finite-dimensional space. 
\end{proof}

Note that, unlike the parameterisation used in \cite{Derksen22b} that implies
that the locus of tensors of subrank $\geq r$ is constructible, the proof
above gives no information about the dimension of that locus.  Such a
bound for border subrank is established in the subsequent sections.

\section{The Cartan-Iwahori-Matsumoto decomposition} \label{sec:CIM}

We recall a classical result from \cite{Iwahori65}. For more along these
lines, we refer to \cite{Alper19} and the references there. Let $K((t))$
denote the field of Laurent series in the variable $t$ with coefficients
in $K$, and $K[[t]]$ its subring of formal power series. For any
(commutative, unital) $K$-algebra $R$ and any affine scheme $X$ over
$K$, the set $X(R)$ denotes the set of $R$-valued points of $X$, i.e.,
the set of $K$-algebra homomorphisms $K[X] \to R$. In particular,
if $G$ is an affine algebraic group over $K$, then $G(K[[t]])$
is a subgroup of the (formal) {\em loop group} $G(K((t)))$. For instance,
$\GL_n(K[[t]])$ is the group of all $n \times n$-matrices with entries
in $K[[t]]$ whose determinant is a unit in $K[[t]]$, i.e., a formal
power series with nonzero constant term. The following theorem follows
from \cite[Corollary 2.17]{Iwahori65}.

\begin{thm} \label{Prop:Cartan-Iwahori-Matsumoto}
Let $G$ be a connected, reductive group over $K$ and let $D$ be a
maximal torus in $G$. Then for any $g=g(t) \in G(K((t)))$ there exist
$h_1(t),h_2(t) \in G(K[[t]])$ and a one-parameter subgroup $\lambda:\Gm
\to D$ such that
\[ g(t)=h_1(t) \lambda(t)  h_2(t)^{-1}. \]
\end{thm}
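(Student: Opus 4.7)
The plan is to prove the decomposition by ascending through three levels of generality, using discrete-valuation arithmetic and Smith normal form in the small cases and root-subgroup combinatorics in the general one.

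I first handle the toral case: when $G = D \cong \Gm^r$, the statement follows coordinatewise from the factorisation $K((t))^\times = t^{\mathbb{Z}} \cdot K[[t]]^\times$ afforded by the $t$-adic valuation, with $h_2 = 1$ and $h_1$ recording the unit parts. For $G = \GL_n$ with $D$ the diagonal torus, I would use Smith normal form over the discrete valuation ring $K[[t]]$: given $g(t) \in \GL_n(K((t)))$, choose $N$ large enough that $t^N g(t) \in M_n(K[[t]])$; since its determinant is nonzero in $K((t))$, the matrix is nondegenerate over the principal ideal domain $K[[t]]$, and Smith normal form supplies $h_1(t), h_2(t) \in \GL_n(K[[t]])$ and integers $a_1 \leq \cdots \leq a_n$ with
\[ t^N g(t) = h_1(t) \cdot \mathrm{diag}(t^{a_1}, \ldots, t^{a_n}) \cdot h_2(t). \]
Absorbing $t^{-N}$ into the diagonal then yields the required decomposition (the cosmetic substitution $h_2 \leftrightarrow h_2^{-1}$ being harmless).

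For general $G$, which is automatically split since $K$ is algebraically closed, the main obstacle lies in transferring the $\GL_N$-decomposition back to $G$: after embedding $G \hookrightarrow \GL_N$ via a faithful representation $\rho$ carrying $D$ into the diagonal torus, the factors produced for $\rho(g(t))$ live in $\GL_N(K[[t]])$ and need not lie in $\rho(G(K[[t]]))$. To close this gap I would exploit the internal structure of $G$ relative to a Borel $B = D \ltimes U$: each root subgroup $U_\alpha \cong \mathbb{G}_a$ scales under the torus by the identity $\mu(t) u_\alpha(x) \mu(t)^{-1} = u_\alpha(t^{\langle \alpha, \mu \rangle} x)$, so after first arranging a $K((t))$-Bruhat decomposition of $g(t)$ one can inductively sweep negative-valuation coordinates of the root subgroups into the toral factor by conjugating with carefully chosen cocharacters. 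Coordinating this sweep across all roots simultaneously while respecting the commutation relations between root subgroups is the genuine hard part---this is precisely the combinatorial heart of Iwahori-Matsumoto's argument in \cite{Iwahori65}.
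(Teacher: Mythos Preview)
Your $\GL_n$ argument---clear denominators by a power of $t$, apply Smith normal form over the PID $K[[t]]$, then rescale the diagonal factor---is exactly the proof the paper gives, and this is in fact the only case the paper proves; for general reductive $G$ the paper simply cites \cite{Iwahori65}. Your additional sketch for general $G$ goes beyond what the paper does, though since it ultimately defers to the same Iwahori--Matsumoto reference for the hard combinatorial step, it does not really constitute an independent proof either.
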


Here $\lambda(t)$ is regarded as a point in $D(K((t))) \subseteq
G(K((t)))$ as follows: the pull-back of $\lambda$ is an algebra
homomorphism $K[D] \to K[\Gm]=K[t,t^{-1}] \subseteq K((t))$.
For our application to border subrank we only need the special case
of the propositions for (products of) $\GL_n$. In that case, there is the following 
well-known easy proof of this decomposition 
(see \cite[Lemma 7.7]{Mukai03}):

\begin{proof}
Since the matrix entries of $g(t)$ are Laurent series, we can choose an
$a \in \ZZ_{\geq 0}$ so that the entries of $t^a
g(t)$ are formal power series, i.e., elements of $K[[t]]$. Then apply the Smith normal form algorithm
to $t^a g(t)$ to decompose it as above---here we use that $K[[t]]$
is a principal ideal domain all of whose ideals are of the form $(t^b)$
for $b \in \ZZ_{\geq 0}$. Finally, multiply the middle factor by $t^{-a}$
to get the corresponding decomposition for $g(t)$.
\end{proof}

\section{A generalised Hilbert-Mumford criterion}

\begin{prop}
\label{Hilbert-Mumford-Prop}
Let a connected, reductive algebraic group $G$ act on an affine
variety $Z$ and let $p,q \in Z$ such that $q \in \overline{G \cdot p}$.
Then there exists a one-parameter subgroup $\lambda:\Gm \to G$ and a point 
$\tilde{q} \in G \cdot q$, such that 
\[
\lim _{t \rightarrow 0 } \lambda (t) \cdot p =\lim _{t \rightarrow \infty }
\lambda(t) \cdot \tilde{q}.
\]
\end{prop}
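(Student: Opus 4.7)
The plan is to promote the classical Hilbert--Mumford argument to the loop group via Theorem~\ref{Prop:Cartan-Iwahori-Matsumoto}, and then to read off the desired $\lambda$ and $\tilde q$ from a weight-space computation.

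First, I would produce an arc in $G$ witnessing $q \in \overline{G \cdot p}$. Since $Z$ is affine, the curve selection lemma yields a discrete valuation ring $R$ with residue field $K$ and fraction field $K((s))$, together with a morphism $\operatorname{Spec} R \to Z$ whose closed point maps to $q$ and whose generic point maps into $G \cdot p$. The orbit map $G \to G \cdot p$ is a $\operatorname{Stab}(p)$-torsor, so lifting the generic point through it is obstructed by a class in $H^1(K((s)), \operatorname{Stab}(p))$, which can be killed after passing to a finite ramified extension of $R$. This yields $g(s) \in G(K((s)))$ such that $z(s) := g(s) \cdot p$ lies in $Z(K[[s]])$ and satisfies $z(0) = q$. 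I expect this lifting step to be the main technical obstacle; everything afterwards is essentially formal.

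Next, I apply Theorem~\ref{Prop:Cartan-Iwahori-Matsumoto} to $g(s)$ and a maximal torus $D \subset G$ to decompose $g(s) = h_1(s) \lambda(s) h_2(s)^{-1}$ with $h_i \in G(K[[s]])$ and $\lambda : \Gm \to D$ a one-parameter subgroup of $G$ defined over $K$. Rearranging yields
\[
\lambda(s) \cdot \bigl(h_2(s)^{-1} \cdot p\bigr) = h_1(s)^{-1} \cdot z(s) \in Z(K[[s]]),
\]
whose value at $s = 0$ equals $u := h_1(0)^{-1} \cdot q$. I would then embed $Z$ into a rational $G$-module $V$ decomposed into $\lambda$-weight spaces with integer weights $w_i$, and expand $h_2(s)^{-1} \cdot p = \sum_{j \geq 0} s^j p_j$ in $V[[s]]$. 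The requirement that the left-hand side of the display lies in $V[[s]]$ forces $(p_j)_i = 0$ whenever $j + w_i < 0$. Specialising to $j=0$ shows that the negative-$\lambda$-weight part of $p_0 = h_2(0)^{-1} \cdot p$ vanishes; reading off the coefficient of $s^0$ dually shows that the positive-$\lambda$-weight part of $u$ vanishes and that the $\lambda$-weight-zero parts of $p_0$ and $u$ coincide.

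Finally, setting $\lambda' := h_2(0) \lambda h_2(0)^{-1}$, a one-parameter subgroup of $G$ playing the role of $\lambda$ in the statement, and $\tilde q := h_2(0) \cdot u = h_2(0) h_1(0)^{-1} \cdot q \in G \cdot q$, the vanishing of the negative-weight part of $p_0$ makes $\lim_{t \to 0} \lambda'(t) \cdot p$ exist and equal $h_2(0)$ applied to the weight-zero part of $p_0$, while the vanishing of the positive-weight part of $u$ makes $\lim_{t \to \infty} \lambda'(t) \cdot \tilde q$ exist and equal $h_2(0)$ applied to the weight-zero part of $u$. Since these weight-zero components agree, the two limits are equal, which finishes the proof.
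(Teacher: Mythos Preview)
Your proposal is correct and follows essentially the same route as the paper: apply the Cartan--Iwahori--Matsumoto decomposition to an arc $g(s)$ witnessing $q \in \overline{G\cdot p}$, conjugate the resulting cocharacter by $h_2(0)$ to obtain the final one-parameter subgroup, set $\tilde q := h_2(0)h_1(0)^{-1}\cdot q$, and finish with a weight-space computation showing both limits equal the image under $h_2(0)$ of the weight-zero component. The only cosmetic difference is that the paper expands $p$ in the moving basis $h_2(t)\cdot v_i$ rather than expanding $h_2(s)^{-1}\cdot p$ as a power series in $s$, and it dispatches the lifting step you flag as the main technical obstacle with a reference to ``standard facts in algebraic geometry.''
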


In particular, we require that both limits exist!  It is essential for
our application to border subrank that left and right involve 
the {\em same} one-parameter subgroup. The proof that
follows is inspired by the proof of the Hilbert-Mumford criterion in
\cite[Chapter 2, \S1]{Mumford93}, but we are not aware of a previous occurrence of
Proposition~\ref{Hilbert-Mumford-Prop} in the literature. 

\begin{proof}
The variety $Z$ can be embedded as a closed, $G$-stable subvariety of a
finite-dimensional vector space $V$ on which $G$ acts linearly. So we
may assume that $Z=V$ is a representation of $G$.

As $q \in \overline{G \cdot p}$, by standard facts in algebraic
geometry, there exists $g(t) \in G(K ((t)))$ such
that $g(t) \cdot p$ lies in $K[[t]] \otimes V$ and reduces to $q$ when we
set $t$ to zero. We write this as $\lim
_{t\to 0 } g(t) \cdot p= q$. Using Theorem~\ref{Prop:Cartan-Iwahori-Matsumoto}, we can decompose 
\[
g(t) = h_1(t) \mu (t) h_2(t)^{-1}
\]
with $h_1(t),h_2(t) \in G(K[[t]])$ and  one-parameter subgroup
$\mu$. We then have
\[
 h_2(t) \mu(t) h_2(t)^{-1} \cdot p = h_2(t) h_1(t)^{-1} g(t)\cdot  p  \to h_2(0) h_1(0)^{-1} \cdot q=: \tilde{q}  \text{ for } t \to 0, 
 \]
where $\tilde{q}$ is an element in the $G$-orbit of $q$. 

Define the one-parameter subgroup $\lambda (t) = h_2(0) \mu (t) h_2 (0)^{-1}$. 
We will show that $\lim_{t \to 0} \lambda(t)\cdot p = \lim_{t \to
\infty} \lambda(t) \cdot \tilde{q}$. 

There exists a basis $v_1, \ldots , v_n$ of $V$ with $\mu (t) v_i =
t^{a_i} v_i$, where $a_i  \in \ZZ$. The elements $h_2(t) \cdot v_1, \ldots , h_2(t) \cdot v_n $ form a free $K[[t]]$-basis of $K [[t]] \otimes V$ and we can write the vector $p$ as a linear combination in this basis:
\begin{equation}
\label{eq:p_in_powerseriesbasis}
p = \sum _{i=1}^n \xi_i(t) h_2(t) \cdot v_i \text{ for certain } \xi_i(t) \in K[[t]].
\end{equation}
So we have 
\begin{align*}
h_2(t) \mu(t) h_2(t)^{-1} \cdot p  &= h_2(t) \mu (t) \sum _{i=1}^n \xi_i(t)  \cdot v_i  
= h_2(t) \cdot \sum _{i=1}^n \xi_i(t) t^{a_i}  v_i  \\
&= \sum _{i=1}^n  t^{a_i} \xi_i(t) h_2(t) \cdot v_i.
\end{align*}
Since this expression converges for
$t \to 0$, we conclude that if $a_i \leq 0$, then $\xi_i  \in K[[t]]$
is divisible by $t^{-a_i}$. For these $i$, we define $\eta_i(t)  =t^{a_i}
\xi_i(t)  \in K[[t]]$. Now we have
\[ 
 p=\sum_{i:a_i \leq 0} t^{-a_i} \eta_i(t)h_2(t) \cdot v_i + \sum_{i:a_i > 0}
\xi_i(t) h_2(t) \cdot v_i, 
\] 
and the computations above shows that 
\[ 
\tilde{q}=\lim_{t \to 0} h_2(t) \mu(t) h_2(t)^{-1} \cdot p=
\sum_{i:a_i \leq 0} \eta_i(0) h_2 (0) \cdot v_i.
\]
Recalling that $\lambda(t) = h_2(0) \mu (t) h_2(0)^{-1}$, we find
\[
\lambda (t) \cdot \tilde{q} = h_2(0) \cdot \sum_{i:a_i \leq 0} \eta_i(0)
t^{a_i} v_i \to h_2(0) \cdot \sum_{i:a_i = 0} \eta_i(0)  v_i \text{ for } t \to \infty. 
\]
For those $i$ with $a_i =0$, we have $\eta_i(0) = \xi_i (0)$. So 
it remains to show that $\lim _{t \to 0 } \lambda (t) \cdot p
=h_2(0) \cdot \sum_{i:a_i = 0} \xi_i(0)  v_i$. For this, we observe that
setting $t=0$ in \eqref{eq:p_in_powerseriesbasis} yields
\[
p = \sum _{i=1}^n \xi_i(0) h_2(0) \cdot v_i.
\] 
We also saw that $\xi_i (0) = 0$ for $a_i <0$, and hence 
\[
\lambda (t) \cdot p = h_2(0) \cdot \sum _{i:a_i \geq 0} \xi_i t^{a_i} v_i \to h_2(0) \cdot \sum_{i:a_i = 0} \xi_i(0)  v_i \text{ for } t \to 0.
\]
We conclude that $\lim_{t \to 0} \lambda(t) \cdot p = \lim_{t \to \infty}
\lambda(t) \cdot \tilde{q}$, as desired.
\end{proof}

The following example from \cite[Chapter 6.8, Example 1]{Popov94}
shows that the limit $\lim_{t \to 0} \lambda (t) \cdot p$ needs not be contained in 
$G \cdot q$; we construct $\lambda$ and $\tilde{q}$ explicitly. 

\begin{ex}
Suppose $G = \SL _2$ and $V = K[x,y]_3$ is the space of binary cubic forms. An element $g = \big(\begin{smallmatrix}
a & b \\
c & d
\end{smallmatrix}\big)$ acts on $f(x,y) = \sum_{i = 0}^3 a_i x^{3-i}y^i$ by 
\[
g \cdot f(x,y) = f(dx-by,-cx+ay).
\]
Let $p := x^2y$ and $q := x^3$. The orbit $G \cdot q $ is not closed,
as its closure contains $0$. We have $q \in  \overline{G \cdot p}$, as 
\[
\lim _{t \to 0} \begin{pmatrix}
t^{-1} & 0 \\
-t^{-2} & t
\end{pmatrix} \cdot p = \lim_{t \to 0}(x^3 + tx^2y) = q.
\]
Assume that there exists a one-parameter subgroup $\lambda $ such that
$\lim_{t \to 0} \lambda (t) \cdot p $ is contained in $G \cdot q$,
i.e., $G \cdot x^3 \cap \overline{D \cdot x^2y} \neq \emptyset$ for
some one-dimensional torus $D \subset \SL_2$. Then the
stabiliser $G_{x^3}$ contains a conjugate of $D$. This is a contradiction,
as $G_{x^3}$ is a one-dimensional unipotent group.

The Cartan-Iwahori-Matsumoto decomposition of $g$ is
\[
\begin{pmatrix}
t^{-1} & 0 \\
-t^{-2} & t
\end{pmatrix}
= \begin{pmatrix}
t& 1 \\
-1   & 0
\end{pmatrix}
\begin{pmatrix}
t^{-2} & 0 \\
0 & t^{2}
\end{pmatrix}
\begin{pmatrix}
1 & -t^3 \\
0 & 1
\end{pmatrix} =: h_1(t) \mu(t) h_2(t)^{-1}.
\]
We define the one-parameter subgroup $\lambda (t) := h_2(0) \mu (t) h_2(0)^{-1} = \big(\begin{smallmatrix}
  t^{-2} & 0\\
  0 & t^2
\end{smallmatrix}\big)$ and $\tilde{q} := h_2(0)h_1(0)^{-1}q = y^3$.
Now
\[
\lim _{t \to 0 } \lambda (t) \cdot p =\lim _{t \to 0 } t^2x^2y = 0
\] 
and 
\[
\lim _{t \to \infty } \lambda (t) \cdot \tilde{q} =\lim _{t \to \infty
} t^{-6}y^3  =0. \qedhere
\]
\end{ex}

\begin{re}
Note that Proposition~\ref{Hilbert-Mumford-Prop} implies the ordinary
Hilbert-Mumford criterion, as follows. If the orbit of $q$ is closed,
then it follows that $\lim_{t \to \infty} \lambda(t) \cdot \tilde{q}$ is a point in
$G\cdot q$ that is reached as the limit $\lim_{t \to 0} \lambda(t)
\cdot p$. 
\end{re}

\section{Proof of the main theorem} \label{sec:Proof}

We analyse the locus $X_{\geq r}$ of tensors of border subrank $\geq
r$. By Proposition~\ref{prop:Constructible}, this is constructible and
hence has a well-defined dimension. We will cover this locus by countably
many constructible subsets, for each of which we can upper bound the
dimension by some number $N$. These constructible subsets are defined
over $K$, and their $L$-points in fact cover $X_{\geq r}(L)$ for any
field extension $L \supseteq K$. For $L$ uncountable, $X_{\geq r}(L)$
cannot be covered by countably many constructible subsets of dimension
strictly smaller than $\dim(X_{\geq r})$. Hence it follows that $N$
is also an upper bound on the dimension of $X_{\geq r}$.

\begin{re}
The proofs show that one can find bounds that are completely independent
of $K$.
\end{re}

Assume that $T \in V_1 \otimes \cdots \otimes V_d=:V$ has border
subrank $\geq r$. Regarding $I_r$ as an element in $V_1 \otimes
\cdots \otimes V_d$, $I_r$ is in the $G$-orbit closure of $T$. By
Proposition~\ref{Hilbert-Mumford-Prop} applied to $G:=\prod_{i=1}^d
\GL(V_i)$ and $Z:=V$, there exists an element $S \in
G \cdot I_r$ and a one-parameter subgroup $\lambda:\Gm \to G$ such that
\[ \lim_{t \to 0} \lambda(t) \cdot T = \lim_{t \to \infty} \lambda(t)
\cdot S =:S_0, \]
and in particular both limits exist. 

For every $a \in \ZZ$ we define the weight space 
\[ V_a(\lambda) := \{v \in V \mid \forall t \in \Gm: \lambda (t) \cdot v = t^a  v
\}. \] 
We write $V_{>0}(\lambda):=\bigoplus_{a>0} V_a(\lambda)$ and define
$V_{\geq 0}(\lambda),V_{< 0}(\lambda),V_{\leq 0}(\lambda)$ in a
similar manner. We then have $T \in V_{\geq 0}(\lambda), S \in V_{\leq
0}(\lambda)$, and the components $T_0,S_0$ of $T,S$ in $V_0(\lambda)$ are
equal. We will derive an upper bound on $\dim(X_{\geq r})$ by counting
parameters in $K$ needed to determine the components $S_0=T_0$ and
$T_{>0}$.

To formalise this count, fix integers 
\[ 
a_{i1} \leq a_{i2} \leq \cdots \leq a_{in_i},\quad i=1,\ldots,d 
\]
and consider the incidence variety
\[ Y=Y((a_{ij})_{i,j}) :=\{ (\lambda,S,T) \mid S \in G \cdot  I_r \text{ and } \lim_{t \to
\infty}\lambda(t) \cdot S=\lim_{t \to 0}\lambda(t) \cdot T \} \]
where $\lambda=(\lambda_1,\ldots,\lambda_d)$ runs over the one-parameter
subgroups into $G$ such that $\lambda_i:\Gm \to \GL(V_i)$ has weights
$a_{ij},\ j=1,\ldots,n_i$ on $V_i$.

(To be precise, one can take the open subvariety of $\prod_{i=1}^d
\PP(V_i)^{n_i}$ consisting of $d$-tuples in which the $i$-th entry is
a projective basis of $\PP(V_i)$ as the variety parameterising
such $\lambda$, even if this is an over-parameterisation in case
$a_{ij}=a_{il}$ holds for some $i$ and some $j \neq l$.)

By the discussion above, $X_{\geq r}$ is contained in the union, over
all countably many choices of the tuple of integers $a_{ij}$, of the
image of $Y$ under projection onto the third component.

\begin{lm} \label{lm:Tg0}
Set $s:=\lfloor r/d \rfloor$. The number of parameters in $K$ needed to
determine $T_{>0}$, i.e., the dimension of the image of the map 
\[ Y \to V, \quad (\lambda,S,T) \mapsto T_{>0}  \text{ (the component of $T$ in
$V_{>0}(\lambda)$)} \]
 is at most
\[ n_1 \cdots n_d - s^d + \sum_{i=1}^d 2 s (n_i-s). \]
\end{lm}

In the proof we will use the {\em slice rank} of $S$: the
minimal sum $\sum_{i=1}^d \dim(U_i)$ where the $U_i$ are subspaces of
$V_i$ satisfying
\[ S \in \sum_{i=1}^d V_1 \otimes \ldots V_{i-1} \otimes U_i \otimes V_{i+1} \otimes \ldots V_d. \]
For more about the slice rank see \cite{Tao16}. We will use that $I_r$,
and hence $S$, has slice rank $r$; see \cite[Example 5]{Tao16}.

\begin{proof}
Consider $(\lambda,S,T) \in Y$ and let $v_{i1},\ldots,v_{in_i}$ be an
eigenbasis for the $i$-th component $\lambda_i:\Gm \to \GL(V_i)$ of
$\lambda$, so that
\[ \lambda_i(t) \cdot v_{ij}= t^{a_{ij}} v_{ij},\quad t \in \Gm. \]
Now $V_{\leq 0}=V_{\leq 0}(\lambda)$ is the space spanned by all tensors 
\begin{equation} \label{eq:leqzero} v_{1j_1} \otimes \cdots \otimes v_{dj_d} \text{ with }
a_{1j_1} + \cdots + a_{dj_d} \leq 0. \end{equation}
Let $P \subseteq [n_1] \times \cdots \times [n_d]$ be the set of tuples
$(j_1,\ldots,j_d)$ with this property. Since the $a_{ij}$ increase
weakly with
$j$, $P$ is downward closed.

Now if for all $(j_1,\ldots,j_d) \in P$ there exists an $i \in [d]$
with $j_i < s$, then $P$ can be covered by $d
\cdot (s-1)<r$ slices of the form $[n_1] \times \cdots \times [n_{i-1}]
\times \{j \} \times [n_{i+1}] \times \cdots \times [n_d]$ where $j = 1, \ldots ,s-1$, and hence
any linear combination of the tensors  in \eqref{eq:leqzero} has slice
rank $<r$. This contradicts the fact that $S \in V_{\leq 0}$, lying in the
orbit of $I_r$, has slice
rank $r$.  Thus $P$ contains a tuple $(j_1,\ldots,j_d)$ with all $j_i
\geq s$. Since $P$ is downward closed, it then contains the hypercube $[s]^d$.

Let $U_i,W_i \subseteq V_i$ be the spaces spanned by
$v_{i1},\ldots,v_{is}$ and $v_{i,s+1},\ldots,v_{in_i}$, respectively,
so that $V_i=U_i \oplus W_i$.
Then we find that $U_1 \otimes \cdots \otimes U_d \subseteq V_{ \leq
0}$ and hence 
\[ T_{>0} \in V_{>0} \subseteq 
\sum_{i=1}^d V_1 \otimes \cdots \otimes V_{i-1} \otimes W_i \otimes
V_{i+1} \otimes \cdots \otimes V_d.  \]
The right-hand side is a space of dimension $n_1 \cdots n_d - s^d$. 

Furthermore, $U_i$ (respectively, $W_i$) is a point in the Grassmannian
of $s$-dimen\-sio\-nal (respectively, $(n_i-s)$-dimensional) subspaces in
$V_i$. Each of these Grassmannians has dimension $s(n_i-s)$. Adding
these dimensions for $i=1,\ldots,d$ to the upper bound $n_1 \cdots n_d -
s^d$ on the dimension of $V_{>0}$ gives the lemma.
\end{proof}

\begin{lm} \label{lm:S0}
The number of parameters in $K$ needed to determine $S_0=T_0$, i.e.,
the dimension of the image of the map 
\[ Y \to V, \quad  (\lambda,S,T) \mapsto S_0, \]
is at most 
\[ r\left(1+d(r-1)+\sum_{i=1}^d (n_i-r)\right). \]
\end{lm}

\begin{proof}
Consider $(\lambda,S,T) \in Y$, so that $\lim_{t \to \infty}
\lambda(t) \cdot S=S_0$ exists. Define
\[ Q=Q(\lambda):=\{g \in G \mid \lim_{t \to \infty} 
\lambda(t)g \lambda(t)^{-1} \text{ exists in $G$} \}. \]
This is a parabolic subgroup of $G$ (see \cite[page 55]{Mumford93}), and for any $g \in Q$ we have
\[ (g \lambda(t) g^{-1}) \cdot S = 
g \cdot (\lambda(t) g^{-1} \lambda(t)^{-1}) \cdot (\lambda(t) \cdot S) \to (g
g_0) \cdot S_0, \quad (t \to \infty) \]
for some $g_0 \in G$.

Fix a basis $e_{i1},\ldots,e_{in_i}$ of $V_i$ such that 
\[ S=I_r=\sum_{i=1}^r e_{1i} \otimes \cdots \otimes e_{di}, \]
and let $B$ be the Borel subgroup of $G$ consisting of $d$-tuples of upper triangular
matrices relative to these bases. Any two parabolic subgroups
intersect in at least a maximal torus (see, e.g., \cite[Corollary
14.13]{Borel91}), hence some maximal torus
$D$ of $G$ is contained in $Q \cap B$. Any two maximal tori in $Q$
are conjugate (see, e.g., \cite[Corollar 11.3]{Borel91}), and therefore there exists a $g \in Q$ such that $\mu:=g
\lambda g^{-1}$ maps $\Gm$ into $D \subseteq B$. By the previous paragraph,
$S_0':=\lim_{t \to \infty} \mu(t) \cdot S$ lies in the $G$-orbit of $S_0$.

Now let $U_i$ be the subspace of $V_i$ spanned by
$e_{i1},\ldots,e_{ir}$. Since the components of $\mu$ are upper-triangular, the space $U_1
\otimes \cdots \otimes U_d$ is preserved by $\mu$. Hence $S_0'$ also
lies in this space, and it is evidently contained in the orbit closure
of the unit tensor $I_r$ under $\prod_{i=1}^r \GL(U_i)$. 

Since $S_0$ lies in the $G$-orbit of $S_0'$, there also exist
$r$-dimensional subspaces $W_i
\subseteq V_i$ such that $S_0$ lies in $W_1 \otimes \cdots \otimes W_d$
and is contained in the orbit closure of a unit tensor in $W_1 \otimes
\cdots \otimes W_d$. Each $W_i$ is a point in a Grassmannian of dimension
$r(n_i-r)$, and the orbit closure of the unit tensor in $W_1 \otimes
\cdots \otimes W_d$ has dimension $r(1+d(r-1))$: indeed, 
$I_r$ uniquely decomposes as a sum of $r$ tensors in the cone over the
Segre product of the $\PP(W_i)$. Thus $S_0$ sweeps out a variety of
dimension at most 
\[ \sum_{i=1}^d r(n_i-r) + r(1+d(r-1)), \]
as desired. 
\end{proof}

\begin{proof}[Proof of Theorem~\ref{thm:Formula}.]
It suffices to show that the image of the morphism $\pi: Y \to V,\ 
(\lambda,S,T) \mapsto T$ has at most the dimension in
Theorem~\ref{thm:Formula}. This morphism factorises via the map
\[ Y \to V \times V,\quad (\lambda,S,T) \mapsto (S_0,T_{>0}) \]
and the addition map $V \times V \to V$. So an upper bound on the
dimension of $\im(\pi)$ is given by adding the dimensions from
Lemmas~\ref{lm:S0} and~\ref{lm:Tg0}. This yields the upper bound in
the theorem.
\end{proof}

\begin{proof}[Proof of Theorem~\ref{thm:Main}]
Set $n_i=n$ and $r=s \cdot d$ in Theorem~\ref{thm:Formula} (with
$s \in \ZZ_{\geq 0}$). Then 
\begin{align*} 
\dim(X_{\geq r}) &\leq n^d-s^d+2sd(n-s)+r(d(n-r) + 1+d(r-1)) \\
&= n^d-s^d+2sd(n-s)+r(d(n-1)+1)\\
&= n^d-(r/d)^d + r( 2(n-r/d)  + d(n-1) + 1 ).
\end{align*}
Any such multiple $r$ of $d$ for which $X_{\geq r}$ is dense,
i.e., has dimension $n^d$, must therefore satisfy 
\[ (r/d)^d \leq r( 2(n-r/d) + 1 + d(n-1)),  \]
so that 
\[ \frac{r^{d-1}}{d^d} \leq 2(n-r/d) +1 + d(n-1) \leq n(2+d). \]
This shows that $r=\cO(n^{1/(d-1)})$ for $n \to \infty$, as desired.
\end{proof}

\begin{re}
In \cite{Derksen22b}, it is proved that the subrank is not additive
on direct sums of tensors of order $d>2$. The wonderfully simple argument is as
follows: let $\Omega$ be the subset of $V^{\otimes d}$ where
the subrank is generic. By their work, this subrank is $O(n^{1/(d-1)})$,
where $n:=\dim(V)$. Since $\Omega$ is constructible and dense, it contains
a dense open subset of $V^{\otimes d}$. Now let $I_n$ be
a unit tensor in $V^{\otimes d}$ of (the maximal) subrank $n$. Then $I_n-\Omega$
also contains a dense open subset, and hence intersects $\Omega$. So
we find $S,T \in V^{\otimes d}$, both of subrank $O(n^{1/(d-1)})$,
that satisfy $S+T=I_n$. But $S+T$ can be obtained from $S \oplus T \in (V \oplus
V)^{\otimes d}$ by applying the linear map $(\id_V+\id_V)^{\otimes
d}$, so $S \oplus T$ has subrank at least $n$. For $n \gg 0$, this
implies that subrank is not additive on direct sums. The same argument,
now using Proposition~\ref{prop:Constructible} for constructibility
and Theorem~\ref{thm:Main} for the dimension bound, shows that border
subrank is not additive, either.
\end{re}

\section{A lower bound for $d=3$}

\begin{proof}[Proof of Theorem~\ref{thm:d3}]
We fix an integer $r \leq \lfloor \sqrt{4n} \rfloor - 3 $ and show that the
locus $X_{\geq r}$ of tensors in $K^n \otimes K^n \otimes K^n$ is dense. 
To this end, we choose integers $a_{ij},\ i \in [3], j \in [n]$ as
follows: 
\[
a_{ij}:=
\begin{cases} 
2^j & \text{ for $i=1,2$; } \\
-2^{r-j+2} & \text{ for $i=3$ and $j \leq r$; and } \\
0 & \text{ for $i=3$ and $j > r$.}
\end{cases}
\] 
Note that for each $i$, $a_{ij}$ is weakly increasing in $j$.
Moreover, the set 
\[ P:=\{(j,k,l) \mid a_{1j}+a_{2k}+a_{3l} \leq 0 \} \]
equals 
\[ \{(j,k,l) \mid l \leq r \text{ and } j,k \leq r-l+1\}. \]
This is a solid pyramid with its top above one of the corners;
see Figures~\ref{fig:aijk} and~\ref{fig:tensorT}.

\begin{figure}
\includegraphics[width=.7\textwidth]{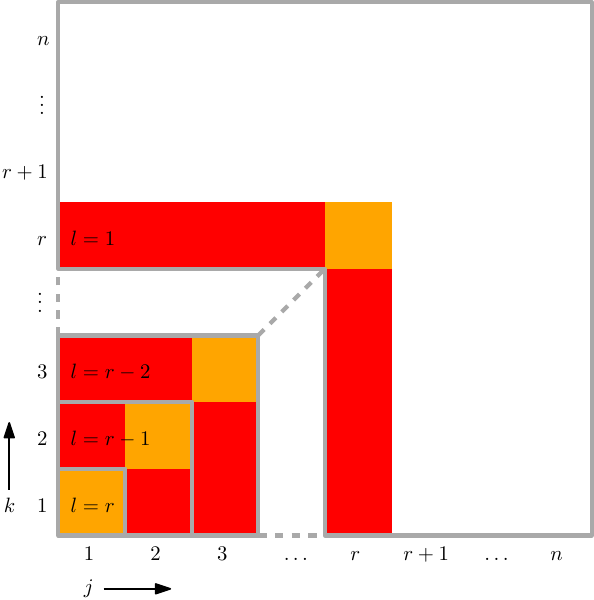}
\caption{The pyramid $P$ seen from above along the $l$-axis.
On the orange corners we have $a_{1j}+a_{2k}+a_{3l}=0$, and on
the red positions (and below these and the orange corners) we have
$a_{1j}+a_{2k}+a_{3l}<0$.}
\label{fig:aijk}
\end{figure}

\begin{figure}
\includegraphics[width=.7\textwidth]{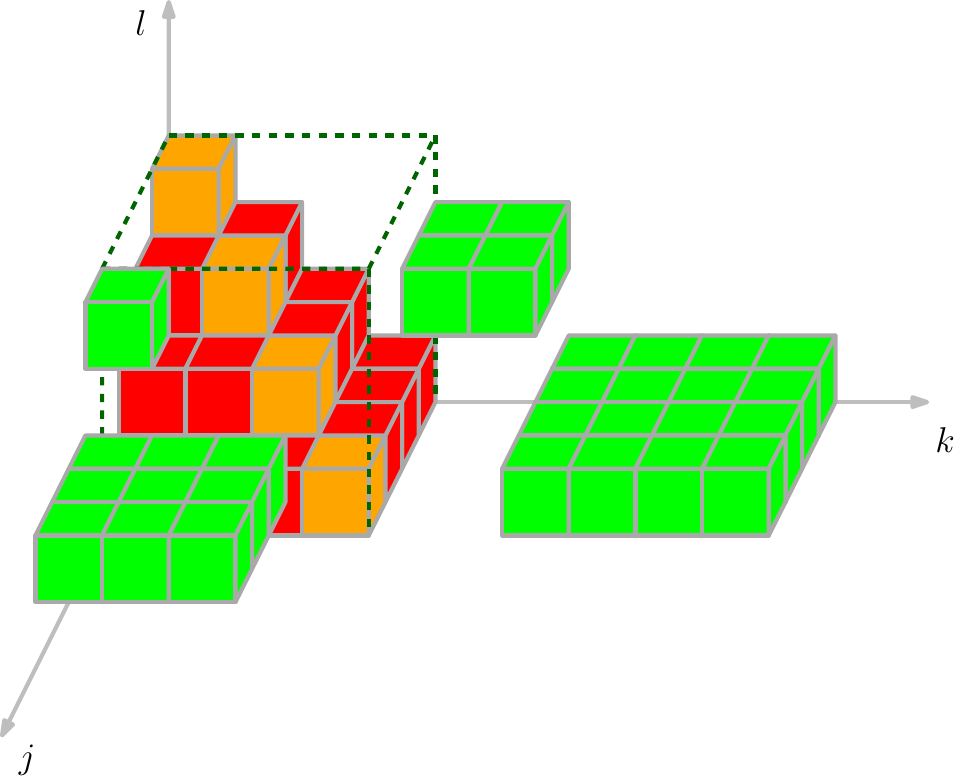}
\caption{The pyramid $P$ in red and orange for $r=4$, and the support of the
additional full-rank matrices in $\tilde{T}$ in green.}
\label{fig:tensorT}
\end{figure}

%Let
%\[
%Y=\{ (\lambda,S,T) \mid S \in G \cdot  I_r \text{ and } \lim_{t \to
%\infty}\lambda(t) \cdot S=\lim_{t \to 0}\lambda(t) \cdot T \}
%\]
%be the incidence variety used in the proof of
%Theorem~\ref{thm:Formula} in \S\ref{sec:Proof}. We claim that the
%projection onto the third component is dominant, so that $X_{\geq r}$ is
%dense. 

Now consider the one-parameter subgroup
$\lambda=(\lambda_1,\lambda_2,\lambda_3):\Gm \to \GL_n^3$ where each
$\lambda_i$ has weight vectors $e_1,\ldots,e_n$ with the weights
$a_{i1},\ldots,a_{in}$, respectively; and let $S$ be the tensor that is
zero everywhere except for $1$s on the orange positions. Note that $S$
is a unit tensor of subrank $r$. For any tensor $T$ that agrees
with $S$ on the positions labelled by $P$---a condition we will write
as $T|_P=S|_P$---we have $\lim_{t \to 0} \lambda(t) \cdot T=S$, so that
the border subrank of $T$ is at least $r$. 

Now, much like in \cite{Derksen22b}, we argue that the morphism 
\[ \Psi: \GL_n \times \GL_n \times \{T : T|_P = S|_P\} \to K^n \otimes
K^n \otimes K^n, \quad 
(g_1,g_2,T) \mapsto (g_1, g_2, \id) \cdot T \]
is dominant. This, then, implies that the tensors of border subrank at
least $r$ are dense in $K^n \otimes K^n \otimes K^n$.
To show that $\Psi$ is dominant, we compute the derivative of $\Psi$
at $p:=(\id,\id,\tilde{T})$ for a $\tilde{T}$ to be chosen carefully below. The
tangent space at $\tilde{T}$ to $\{T : T|_P=S|_P\}$ is precisely the
space $V_{>0}$ spanned by the tensors $e_j \otimes e_k \otimes e_l$
with $a_{1j}+a_{2k}+a_{3l}>0$, and $d_p \Psi$ restricted to $V_{>0}$
is the inclusion $V_{>0} \to (K^n)^{\otimes 3}$. So it suffices to show that the restriction of $d_p
\Psi$ to $\gl_n \times \gl_n$ projects surjectively onto
$V_{\leq 0}$, the space spanned by the tensors $e_j \otimes e_k \otimes
e_l$ with $(j,k,l) \in P$. In fact, rather than all of $\gl_n$, we will only use upper triangular
matrices.  Let $E_{ab}$ be the $n \times n$-matrix with zeros everywhere
except for a $1$ on position $(a,b)$. Then $(d_p \Psi) (E_{ab},0)$
is the tensor obtained by putting a copy of the $b$th $j$-slice in the
position where $j=a$ and zeroes elsewhere. We only care about the positions 
in the pyramid $P$, i.e., about $(d_p \Psi)(E_{ab},0)|_P$. 
Similarly in the $k$-direction. 

In the layers with $l=r-s$ with $r > s$ and $s \geq 0$ even, we put any
full-rank $(s+1) \times (s+1)$ matrix $A_s$ in $\tilde{T}$ far enough in front
of $P$, say in positions $[j_s,j_s+s] \times [1,s+1] \times \{l\}$,
so that multiplying this matrix with linear combinations of the matrices
$E_{ab}$ with $a \leq s+1$ and $b \in [j_s,j_s+s]$ yields all possible
matrices in $P$ in layer $l$. We take these such that the intervals
$[j_s,j_s+s]$ are all disjoint. 

Similarly, in the layers with $l=r-s$
with $r>s$ and $s \geq 1$ odd, we put a full-rank $(s+1) \times (s+1)$-matrix
far enough to the right of $P$. See Figure~\ref{fig:tensorT}.
In all positions outside $P$ and outside these matrices,
we choose the entries of $\tilde{T}$ to be $0$.  This ensures that $\Psi$
is dominant, as desired.  

Assume $r$ is even. For the first type of matrices to fit, it suffices
that
\[ n \geq r+(1+3+\cdots+(r-1))=r+r^2/4; \]
and for the second type of matrices to fit, it suffices that 
\[ n \geq r+(2+4+\cdots+r)=r+r(r+2)/4. \]
Assume that $r$ is odd. For the first type of matrices to fit, it
suffices that 
\[ n \geq r+(1+3+\cdots+r)=r+(r+1)^2/4;\]
and for the second type of matrices to fit, it suffices that 
\[ n \geq r+(2+4+\cdots+(r-1))=r+(r-1)(r+1)/4. \]
Summarising, if $n \geq (r+3)^2/4$, then $X_{\geq r}$ is dense. This is
equivalent to 
\[ r \leq \sqrt{4n} -3, \]
and in particular satisfied by $r=\lfloor \sqrt{4n} \rfloor -3$.
Comparing this with the generic subrank in the interval \eqref{eq:Interval}, 
we find that the generic border subrank is strictly greater.
\end{proof}

\section{Further questions}

\begin{enumerate}

\item In \cite{Chang22}, a lower bound on the dimension of the locus
$X_n=X_{\geq n}$ of maximal border subrank tensors is determined for
$d=3$: 
\[ \dim(X_n) \geq (2n^3 + 3n^2 - 2n - 3)/3. \]
Assuming that $n$ is a multiple of $3$, Theorem~\ref{thm:Formula}
yields the following upper bound:
\begin{align*} \dim(X_n) &\leq n^3-(n/3)^3 + 6(n/3)(n-(n/3)) +
n(1+3(n-1))\\
&= \frac{26}{27} n^3 + \frac{13}{3}n^2 - 2n. 
\end{align*}
It would be interesting to find out what is the correct coefficient of
$n^3$ for $n \to \infty$. 

\item In \cite{Derksen22b}, the asymptotic behaviour of the generic
subrank for tensors of order three is determined almost exactly. Can
this be done of the generic border subrank, as well?

\item One can define the {\em Hilbert-Mumford subrank} of $T$ as the
maximal $r$ such that there exists a one-parameter subgroup $\lambda$
into $G$ such that $\lambda(t) \cdot T \to I_r$ for $t \to 0$. This lies
between the subrank of $T$ and the border subrank of $T$. It can be
strictly larger than the subrank of $T$; this follows, for instance,
from \cite{Chang22}: the locus of maximal Hilbert-Mumford subrank
has dimension $\Theta(n^3)$ for $n \to \infty$, whereas the orbit of
$I_n$ has dimension $\Theta(n^2)$.  But can the border subrank of $T$
be strictly larger than the Hilbert-Mumford subrank? 

\end{enumerate}

\bibliographystyle{alpha}
\bibliography{diffeq}

\end{document}